\pdfoutput=1
\RequirePackage{fix-cm}
\documentclass[letterpaper,10pt,conference]{ieeeconf}
\IEEEoverridecommandlockouts
\usepackage{amsmath,amssymb,mathtools}
\usepackage{graphicx,cite}
\usepackage[hidelinks]{hyperref}
\usepackage{microtype}
\usepackage{stfloats}
\usepackage[checkfloat,nocheckfootnote]{flushend}
\usepackage{needspace}
\usepackage{placeins}
\newcommand{\R}{\mathbb R}
\newcommand{\E}{\mathbb E}
\newcommand{\Prob}{\mathbb P}
\newcommand{\btheta}{\boldsymbol{\theta}}
\newcommand{\thh}{\hat{\btheta}}
\newcommand{\Lstar}{L(\btheta^*)}
\newcommand{\yb}{\bar{y}}
\newcommand{\epsb}{\bar{\epsilon}}
\newcommand{\newth}[1]{\btheta_{\mathrm{new}}(#1)}
\newtheorem{theorem}{Theorem}
\makeatletter
\def\@begintheorem#1#2{\@IEEEtmpitemindent\itemindent\topsep\z@\rmfamily\trivlist
  \item[\hskip\labelsep{\itshape #1\ #2:}]\itemindent\@IEEEtmpitemindent}
\def\@opargbegintheorem#1#2#3{\@IEEEtmpitemindent\itemindent\topsep\z@\rmfamily\trivlist
  \item[\hskip\labelsep{\itshape #1\ #2\ (#3):}]\itemindent\@IEEEtmpitemindent}
\makeatother
\renewenvironment{proof}{\par\noindent\textit{Proof. }\ignorespaces}{\par}
\newenvironment{assumptions}{\begin{list}{}{\setlength{\leftmargin}{0pt}\setlength{\itemindent}{0pt}\setlength{\itemsep}{4pt}\setlength{\parsep}{0pt}}}{\end{list}}

\renewenvironment{abstract}{\par\normalfont\small\noindent\textbf{Abstract}---\,\ignorespaces}{\par}
\newcommand{\proofpart}[1]{\par\smallskip\noindent\textbf{#1}\par\nobreak\noindent\ignorespaces}
\graphicspath{{figures/}}
\hypersetup{pdftitle={Blind Random Search with Noisy Measurements: Almost Sure Convergence and Convergence Rates},pdfauthor={Zixian Zhou and James C. Spall}}

\title{\LARGE\bf Blind Random Search with Noisy Measurements:\\Almost Sure Convergence and Convergence Rates}
\author{Zixian Zhou\textsuperscript{1}\thanks{\textsuperscript{1}Zixian Zhou is with
 the Department of Applied Mathematics and Statistics, Johns Hopkins University,
 Baltimore, MD, USA, \href{mailto:zzhou147@jh.edu}{zzhou147@jh.edu}.} and
 James C. Spall\textsuperscript{2}\thanks{\textsuperscript{2}James C. Spall is with
 the Department of Applied Mathematics and Statistics, Johns Hopkins University,
 Baltimore, MD, USA and the Johns Hopkins University Applied Physics Laboratory,
 Laurel, MD, USA. Corresponding author:
 \href{mailto:james.spall@jhuapl.edu}{james.spall@jhuapl.edu}.}}
\begin{document}
\maketitle
\thispagestyle{empty}\pagestyle{empty}

\begin{abstract}
Blind random search is a simple global optimization method that only requires the value of the loss function, without needing gradient information or an explicit loss model. However, its classical formulation assumes that the loss evaluation is noise-free, which limits its direct application to practical problems with noisy measurements. We extend blind random search by combining a vanishing acceptance threshold with the running average of noisy loss measurements. Under the stated assumptions, we prove that the proposed algorithm converges almost surely to the global minimizer. We further analyze the rate at which the loss at the current estimate approaches the minimum loss. For low-dimensional cases, we obtain an almost sure asymptotic upper bound on the difference between the two, and for high-dimensional cases, we prove that this difference, suitably scaled, has an exact Weibull limiting distribution.
\end{abstract}

\section{Introduction}

In many practical optimization problems, direct gradient measurements are not
readily available, and measurements of the loss function are often noisy.
Obtaining gradients from system outputs can also require detailed knowledge of
the relationships between system inputs and outputs, whereas methods based on
function measurements can avoid such a detailed model
\cite[pp.~150--153]{spall2003}, \cite{spall1994}.
Random search is the simplest formal optimization method \cite[Chap.~2]{spall2003}.
Random search methods can be competitive when
function characteristics other than function values are difficult to compute
\cite[p.~19]{solis1981}. Among methods based on function measurements,
this paper focuses on blind random search. Candidate points are sampled independently, without using past search
results to guide sampling \cite[pp.~37--38]{spall2003}. In its basic form with uniform sampling, blind random search requires no tuning
of algorithmic coefficients, a property unique among the general stochastic
search and optimization algorithms discussed in \cite[p.~38]{spall2003}. However, it may require a very large number of
loss evaluations to approach the minimizer, especially in high dimensions
\cite[pp.~37, 49--50]{spall2003}.
This paper develops new results for blind random search in the case of noisy
function measurements.

In the absence of noise, blind random search accepts a candidate only when its
loss is below that of the current estimate, so the loss at the current estimate
is nonincreasing \cite[p.~38]{spall2003}. However, loss measurements are often noisy in
practice \cite{spall1994}, so a candidate can appear better while having a
higher true loss \cite{alexander2005}. Random search methods have been used,
for example, for maximum power point tracking in partially shaded photovoltaic
systems \cite{sundareswaran2014}, as well as in broader black-box
optimization procedures \cite{regis2022}. Blind random search, in particular, is
used in robust controller synthesis \cite{vidyasagar2001} and hyperparameter
tuning in machine learning \cite{bergstra2012}. In both cases, candidate points
are sampled independently, and only noisy loss measurements are available.
Averaging and thresholding are two
common approaches for coping with noise in comparison-based search
\cite[pp.~50--51]{spall2003}. Averaging reduces the variance of loss estimates, and a
positive threshold reduces false acceptances caused by noise. Convergence has
been established for several sequential comparison methods that use one or
both approaches \cite{devroye1976,alexander2005},
\cite[Thms.~5--6]{devroye2002}.

In these sequential methods, the loss at the current estimate is measured
anew at every iteration, even when no candidate is accepted and the
algorithm remains at the same $\theta$. Devroye's comparisons use two
independent measurement sequences \cite[p.~53]{devroye1976}. Conditional
on the comparison points, the new measurement noise is independent of
past acceptance and rejection decisions. Such resampling, however,
increases measurement cost. Devroye regarded discarding data after a
comparison as wasteful \cite[p.~706]{devroye1976b} and studied methods that
store earlier search points and their measurements
\cite[Sec.~10]{devroye2002}, \cite{devroye1976b}. These methods also reuse previous averages in later comparisons. They
still take loss measurements at the same points, with the number of
measurements chosen by the user \cite[Sec.~10]{devroye2002},
\cite[Sec.~III]{devroye1976b}. Their algorithms do not use a positive
acceptance threshold \cite[Thm.~7]{devroye2002},
\cite[eq.~(10)]{devroye1976b}, so false acceptances caused by noise are
still possible.

Our algorithm instead keeps the running average at the current estimate
and updates it with exactly one additional loss measurement after each
rejection. It couples this update with a vanishing acceptance threshold. We establish its almost sure convergence and derive the
corresponding convergence rates. Convergence rates also provide a
theoretical basis for comparing the efficiency of stochastic optimization
algorithms \cite{spallhillstark2006}. Rates
for random search have been derived with exact function evaluations by
D\'orea~\cite{dorea1986}, under conditions on the probability of sampling
near the minimizer. For noisy loss measurements, Chia and Glynn~\cite{chia2013}
analyze how a measurement budget is divided between candidate points and
repeated measurements at each point. We analyze the convergence rates
of the proposed algorithm in two regimes, where the regimes are determined
by the dimension of the parameter vector being optimized, obtaining an
almost sure upper rate in one and an exact Weibull limit in the other.
The boundary between the high-dimensional and low-dimensional regimes is
relative rather than absolute: it depends on how steeply the loss function
increases near the minimizer and on the sampling distribution.

Section~II presents the algorithm. Section~III introduces the assumptions,
convergence guarantees, and rate results. Section~IV reports two numerical
experiments, a comparison of the three algorithms, and photovoltaic
simulation results. Section~V is the conclusion.

\section{The Algorithm}
Consider the problem of minimizing a loss function $L(\btheta)$ over a domain
$\Theta\subseteq\R^p$, where only noisy loss measurements
$y(\btheta)=L(\btheta)+\epsilon$ are available. The following modification of
blind random search \cite{spall2003} combines running averages over an
increasing number of loss measurements with a vanishing threshold.

\begin{enumerate}
\item \textbf{Initialization:} Fix an integer $k_0\ge1$ and select an initial
estimate $\thh_0\in\Theta$. Compute the average of $k_0$ loss measurements,
$\yb(\thh_0)=\frac{1}{k_0}\sum_{i=1}^{k_0}y_i(\thh_0)$, and set $k=k_0$ with
$\thh_{k_0}:=\thh_0$ and $\yb(\thh_{k_0}):=\yb(\thh_0)$.
\Needspace{4\baselineskip}
\item \textbf{Search step (iteration $k$):}
\begin{itemize}
\item Randomly generate a candidate $\newth{k+1}\in\Theta$. From $k+1$ new
loss measurements, compute the average at the candidate,
\[
 \yb(\newth{k+1})=\frac{1}{k+1}\sum_{i=1}^{k+1}y_i(\newth{k+1}).
\]
\item Accept the candidate if
\begin{equation}\label{eq:accept}
 \yb(\newth{k+1})<\yb(\thh_k)-\tau_k,
\end{equation}
where the vanishing threshold is
\[
 \tau_k:=c\sigma\sqrt{\frac{\log(k+1)}{k}},\qquad c>2\sqrt2,
\]
and $\sigma$ is the known sub-Gaussian scale parameter formalized in Assumption A2 below.
\item \emph{If accepted:} set $\thh_{k+1}=\newth{k+1}$ and
$\yb(\thh_{k+1})=\yb(\newth{k+1})$.
\item \emph{If rejected:} set $\thh_{k+1}=\thh_k$. Obtain one additional loss
measurement $y_{k+1}(\thh_k)$ and update the average,
\[
 \yb(\thh_{k+1})=\frac{k\,\yb(\thh_k)+y_{k+1}(\thh_k)}{k+1}.
\]
\item At the end of iteration $k$, $\yb(\thh_{k+1})$ is the average of exactly
$k+1$ loss measurements at $\thh_{k+1}$. Increase $k$ by one and return to
the search step.
\end{itemize}
\item \textbf{Termination:} Stop when a predefined criterion is met, such as a
maximum number of noisy loss measurements, or when the user is otherwise
satisfied with the current estimate.
\end{enumerate}

For $k_0=1$, the output after $K$ iterations is $\thh_{K+1}$, obtained using
$N_K$ loss measurements:
\begin{equation}\label{eq:budget}
 N_K=1+\sum_{k=1}^{K}(k+1)+R_K
     =1+\frac{K(K+3)}{2}+R_K.
\end{equation}
Here $R_K$ counts rejected candidates, so $N_K$ grows quadratically with $K$.
Rates below are stated in terms of the iteration index $k$, while the
numerical comparisons use matched measurement budgets.

\section{Almost Sure Convergence and Rates}
Let $\btheta^*$ be a global minimizer of $L$.
The suboptimality gap is $L(\thh_k)-\Lstar$, and the distance to the
minimizer is $\|\thh_k-\btheta^*\|$ (Euclidean norm $\|\cdot\|$).
For $\eta>0$, let $S_\eta:=\{\btheta\in\Theta:L(\btheta)<\Lstar+\eta\}$.
Assumptions A1--A3 are from \cite{zhoujiang2026}; A4 is added for the
rate analysis.
\begin{assumptions}
\item \textbf{A1} \emph{(Separation of the minimizer).} This condition ensures that if the loss $L(\thh_k)$ converges to its minimum $\Lstar$, then the estimate $\thh_k$ converges to the minimizer $\btheta^*$. For every $r>0$,
\begin{equation}\label{eq:separation}
 \inf_{\btheta\in\Theta:\,\|\btheta-\btheta^*\|\ge r}L(\btheta)>\Lstar.
\end{equation}
In particular, the minimizer is unique.
\item \textbf{A2} \emph{(Noise).} This condition specifies the distribution of the measurement noise. All measurement noise terms drawn by the algorithm are i.i.d., centered, and sub-Gaussian with variance proxy $\sigma^2$, as defined by the following inequality:
\begin{equation}\label{eq:subgaussian}
 \E[\exp(\lambda\epsilon)]\le\exp(\lambda^2\sigma^2/2),\qquad \lambda\in\R.
\end{equation}
Examples include Gaussian noise with mean zero, Bernoulli noise with its
mean subtracted, and scaled Rademacher noise, as well as other bounded
noise with mean zero \cite[Sec.~2.1]{wainwright2019}.
\item \textbf{A3} \emph{(Reachability).} There is always a positive probability of sampling a point in the region of nearly minimal loss. Candidates are independent across iterations and independent of the measurement noise. Their sampling distributions are fixed in advance. For every $\eta>0$, there is a $\delta(\eta)>0$, independent of $k$, such that
\begin{equation}\label{eq:reach}
 \Prob\bigl(L(\newth{k+1})<\Lstar+\eta\bigr)\ge\delta(\eta).
\end{equation}
\item \textbf{A4.} There exist $\gamma,\mu,a,\eta_0>0$ such that:

\emph{(i) Local growth.} The loss increases at least as fast as the $\gamma$th power of the distance to $\btheta^*$: for every $\btheta\in\Theta$,
\begin{equation}\label{eq:growth}
 \begin{aligned}
 0\le L(\btheta)-\Lstar<\eta_0\quad\Longrightarrow\quad\\[-2pt]
 L(\btheta)-\Lstar\ge\mu\|\btheta-\btheta^*\|^\gamma.
 \end{aligned}
\end{equation}
For example, $L(\btheta)=\Lstar+\|\btheta-\btheta^*\|^2$ satisfies
(i) with $\gamma=2$, $\mu=1$, and any $\eta_0>0$.

\emph{(ii) Sampling distribution} \cite[p.~78]{dorea1986}.
We strengthen A3 by specifying how fast the probability of sampling a point
in the region of nearly minimal loss decreases as the region shrinks: the
candidate points have a common sampling distribution satisfying
\begin{equation}\label{eq:lower-tail}
 \begin{aligned}
 &\Prob\bigl(L(\newth{j+1})-\Lstar<\eta\bigr)\\[-3pt]
 &\qquad=a\,\eta^{p/\gamma}+o\bigl(\eta^{p/\gamma}\bigr),
 \qquad\eta\to0.
 \end{aligned}
\end{equation}
The constant $a$ is the leading coefficient of the near-optimal sampling
probability, determined by both the loss geometry and the sampling distribution.
For the quadratic example above, uniform sampling on the unit ball centered
at $\btheta^*$ gives $a=1$.

\end{assumptions}

The $\gamma$ here must satisfy both (i) and (ii). For a given loss function
and sampling distribution, (ii) determines $\gamma$ uniquely. The larger the $\gamma$ value, the more gradually the loss decreases as
$\btheta$ approaches $\btheta^*$, making it easier to sample points where the
loss is close to its minimum. In high dimensions, the loss at the current
estimate converges to the minimum more quickly. In low dimensions, the
guaranteed convergence rate is determined by the threshold, so changes in
$\gamma$ do not affect this rate.

Compared with heuristic methods lacking convergence guarantees, the proposed
algorithm is theoretically more reliable. The following theorem establishes
almost sure convergence under A1--A3, retaining the same strength of convergence
as noise-free blind random search \cite[Thm.~2.1]{spall2003}.

\Needspace{8\baselineskip}
\medskip
\begin{theorem}[Almost sure convergence {\cite{zhoujiang2026}}]\label{thm:as}
\mbox{}\par\noindent
Suppose A1--A3 hold, and let $\{\thh_k\}$ be generated by the
algorithm in Section~II. Then, as $k\to\infty$,
\begin{equation}\label{eq:as-result}
 \begin{gathered}
 L(\thh_k)\to\Lstar,\quad \yb(\thh_k)\to\Lstar,\\
 \thh_k\to\btheta^*,
 \end{gathered}
\end{equation}
all almost surely.
\end{theorem}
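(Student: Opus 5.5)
The plan is to control the running-average error uniformly in time with a sub-Gaussian maximal inequality and Borel--Cantelli, and then to argue pathwise.

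\textbf{Uniform accuracy of the averages.} For each $k$ the algorithm uses two averages: the candidate average $\yb(\newth{k+1})$ over $k+1$ fresh measurements, and the incumbent average $\yb(\thh_k)$ over $k$ measurements. Write $\yb(\btheta)=L(\btheta)+\epsb$ in either case. The incumbent's measurements are not fresh: its noise has been filtered by past acceptance decisions. I handle this by indexing noise by the ``slot'' in which it was drawn. The candidate generated at iteration $j$ carries an i.i.d. noise stream. Its $i$th measurement is drawn at iteration $j$ if $i\le j+1$, and otherwise during later rejections. The partial averages $\epsb_{j,m}$ of this stream, for $m\ge j+1$, are defined independently of any decisions. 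By A2 and the Chernoff bound, $\Prob(|\epsb_{j,m}|>t)\le 2\exp(-mt^2/(2\sigma^2))$. Take $t=\tau_m/2=\tfrac{c\sigma}{2}\sqrt{\log(m+1)/m}$. The bound becomes $2(m+1)^{-c^2/8}$, and $c^2/8>1$ because $c>2\sqrt2$.

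The union over all streams $j\le m$ would cost an extra factor $m$, which this exponent does not absorb. So I only need the streams that actually matter. At time $k$, the relevant streams are the fresh candidate's stream at $m=k+1$ and the current incumbent's stream at $m=k$; the incumbent is a single stream, selected by past decisions. I will argue that the event $E_k=\{|\epsb(\newth{k+1})|>\tau_k/2 \text{ or } |\epsb(\thh_k)|>\tau_k/2\}$ has summable probability. For the incumbent, condition on the index $J$ of the currently held stream. If I could not handle the selection directly, I would replace it by a maximal inequality along each stream: $\Prob(\sup_{m\ge n}|\epsb_{j,m}|/\tau_m>1/2)$ decays polynomially in $n$. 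Summing over $j$ with $n=j+1$ then gives a finite sum, since the exponent exceeds $1$ after the maximal-inequality (blocking) argument. Borel--Cantelli then gives almost surely a $K_0$ with $|\epsb|\le\tau_k/2$ for every average in use at any $k\ge K_0$. I expect this step to be the main obstacle, because of the dependence of the incumbent's noise on past acceptances.

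\textbf{Pathwise consequences.} Fix a sample path with $k\ge K_0$. Suppose a candidate is accepted. Then $L(\newth{k+1})-\tau_k/2<\yb(\newth{k+1})<\yb(\thh_k)-\tau_k\le L(\thh_k)-\tau_k/2$, so $L(\thh_{k+1})<L(\thh_k)$. Hence $L(\thh_k)$ is eventually nonincreasing and converges to some limit $\ell\ge\Lstar$. Suppose $\ell>\Lstar$ and set $\eta=(\ell-\Lstar)/2$. A candidate with $L<\Lstar+\eta$ is accepted once $\tau_k<\eta/2$, because then $\yb(\newth{k+1})<\Lstar+\eta+\tau_k/2\le\ell-\tau_k-\tau_k/2\le\yb(\thh_k)-\tau_k$. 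By A3 and independence of the candidates, the second Borel--Cantelli lemma shows that such candidates appear infinitely often almost surely. Each acceptance drives $L$ below $\ell$, a contradiction. Therefore $L(\thh_k)\to\Lstar$ almost surely, using a countable sequence of $\eta$.

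\textbf{Remaining claims.} Since $|\yb(\thh_k)-L(\thh_k)|\le\tau_k/2\to0$, we get $\yb(\thh_k)\to\Lstar$. Finally, A1 gives $\thh_k\to\btheta^*$: if $\|\thh_k-\btheta^*\|\ge r$ infinitely often, then $L(\thh_k)\ge\inf_{\|\btheta-\btheta^*\|\ge r}L(\btheta)>\Lstar$ infinitely often, contradicting the convergence of $L(\thh_k)$.
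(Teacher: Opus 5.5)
\textbf{Gap: the uniform-accuracy step.} Your pathwise argument is correct and is essentially the paper's Parts III--VI: eventual monotonicity, recurrence via the second Borel--Cantelli lemma over a countable family of $\eta$, and then A1. The gap is the uniform-accuracy step, which carries the whole proof and which you leave as a sketch.

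Conditioning on the index $J$ of the held stream is not legitimate. $J$ is a function of the same noise (this is exactly the selection bias), so given $J=j$ the average $\epsb_{j,k}$ is no longer an average of i.i.d.\ sub-Gaussian terms.

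Your fallback does bypass selection: control every stream $j$ at every time $m\ge j+1$ and apply Borel--Cantelli over $j$. But the claim that ``the exponent exceeds $1$'' is the entire difficulty, and the standard ways of proving it fail for $2\sqrt2<c\le4$. Let $B_j$ be the event that stream $j$ ever leaves the band $\tau_m/2$ for some $m\ge j+1$.
\begin{itemize}
\item A union bound over $m$ gives $\Prob(B_j)\lesssim j^{1-c^2/8}$.
\item Dyadic peeling with Doob's inequality gives $\Prob(B_j)\lesssim j^{-c^2/16}$.
\end{itemize}
Both are summable in $j$ only if $c>4$.

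The step can be repaired with geometric blocks $[\rho^i,\rho^{i+1})$, where $1<\rho<c^2/8$. Doob's inequality for $e^{\lambda S_m}$ bounds block $i$ by $2\rho^{-ic^2/(8\rho)}$, so $\Prob(B_j)\le C_\rho (j+1)^{-c^2/(8\rho)}$, which is summable. You must then add that each of the finitely many streams with $B_j$ is eventually inside the band, which follows from $\sum_m 2(m+1)^{-c^2/8}<\infty$. Note that this gives a.s.\ eventual control directly (Borel--Cantelli over streams $j$), not summability of $\Prob(E_k)$ as you announced.

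\textbf{The paper's simpler route.} The paper avoids the maximal inequality by bounding the difference $D_k=\epsb_k^{\mathrm{new}}-\epsb_k^{\mathrm{cur}}$ at level $\tau_k$, rather than each average at level $\tau_k/2$. For each fixed stream that could be the incumbent, $D_k$ is a difference of independent averages with variance proxy at most $\sigma^2/(k+1)+\sigma^2/k\le 2\sigma^2/k$. The exponent is therefore $c^2/4$ instead of $c^2/8$. A plain union bound over the at most $k$ possible incumbent streams at each fixed $k$ then gives $2k(k+1)^{-c^2/4}$, which is summable exactly when $c>2\sqrt2$.

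Monotonicity and eventual acceptance use only $|D_k|\le\tau_k$. The convergence $\yb(\thh_k)\to\Lstar$ is recovered from a separate fresh-sample bound on $\epsb_k^{\mathrm{new}}$ together with $\epsb_k^{\mathrm{cur}}=\epsb_k^{\mathrm{new}}-D_k$. Once repaired, your route gives the slightly stronger conclusion that both averages are individually within $\tau_k/2$, at the price of a maximal inequality with a $c$-dependent blocking ratio.
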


\begin{proof}
A condensed form of the proof is given here. The complete proof is given
in \cite[Sec.~3]{zhoujiang2026}.
\proofpart{Part I: Comparison noise and selection bias}
Let $\epsb_k^{\mathrm{cur}}$ and $\epsb_k^{\mathrm{new}}$ be the noise
averages at the current estimate and the candidate.
\[
 \begin{aligned}
  \epsb_k^{\mathrm{cur}}&:=\yb(\thh_k)-L(\thh_k),\\
  \epsb_k^{\mathrm{new}}&:=\yb(\newth{k+1})-L(\newth{k+1}).
 \end{aligned}
\]
Set
$D_k:=\epsb_k^{\mathrm{new}}-\epsb_k^{\mathrm{cur}}$ and
$\Delta_k:=L(\newth{k+1})-L(\thh_k)$.
The acceptance test is
\[
 \Delta_k+D_k<-\tau_k.
\]
At a fixed point, a smaller noise $\epsilon$ yields a smaller observed
loss $y=L+\epsilon$. The acceptance rule therefore favors candidates with
smaller average measurement noise, creating selection bias: the average
noise at the current estimate includes the smaller noise values favored by
acceptance, so the average noise at the current estimate generally does not
follow the distribution of an average of the original i.i.d.\ noise terms
assumed in A2 \cite[Sec.~3, Part~I]{zhoujiang2026}.

\proofpart{Part II: The threshold controls comparison noise}
The uniform bound in \cite[Sec.~3, Parts~III--IV]{zhoujiang2026}
accounts for this selection by controlling, before the algorithm's
choices are applied, every sample that could be kept at the current estimate. It gives
\begin{equation}\label{eq:uniform-noise}
 \Prob\bigl(|D_k|>\tau_k\bigr)\le2k(k+1)^{-c^2/4}.
\end{equation}
Since $c^2/4>2$, this bound is summable. The first Borel--Cantelli
lemma therefore gives an a.s.\ finite $J_0$ such that
\begin{equation}\label{eq:eventual-noise}
 |D_k|\le\tau_k\quad\text{for all }k\ge J_0,\quad\text{a.s.}
\end{equation}
In particular, $D_k\to0$ a.s.\ because $\tau_k\to0$.

\proofpart{Part III: Eventual monotonicity}
For $k\ge J_0$, acceptance implies
$\Delta_k<-\tau_k-D_k\le0$; rejection leaves the point unchanged.
Thus $L(\thh_k)$ is eventually nonincreasing a.s.

\proofpart{Part IV: Recurrence near the minimizer}
Fix $\eta>0$. By A3, the independent events
$G_k=\{\newth{k+1}\in S_{\eta/2}\}$ have probabilities at least
$\delta(\eta/2)>0$. Hence $\sum_k\Prob(G_k)=\infty$, and the
second Borel--Cantelli lemma gives $G_k$ infinitely often a.s.

\proofpart{Part V: Eventual acceptance}
For all sufficiently large $k$, $|D_k|\le\tau_k<\eta/4$.
If $\thh_k\notin S_\eta$ and $G_k$ occurs, then $\Delta_k<-\eta/2$, so
\[
 \Delta_k+D_k<-\eta/2+\eta/4=-\eta/4<-\tau_k.
\]
The candidate is accepted into $S_{\eta/2}\subset S_\eta$.
Part~IV ensures such an opportunity eventually occurs, and Part~III
keeps every subsequent iterate in $S_\eta$.

\proofpart{Part VI: Convergence}
Applying Part~V to the countable sequence $\eta=1/m$,
$m\in\mathbb N$, gives $L(\thh_k)\to\Lstar$ almost surely.
A1 then gives $\thh_k\to\btheta^*$ a.s.
For the fresh candidate average, the sub-Gaussian bound
\cite{wainwright2019} gives
$\Prob\bigl(|\epsb_k^{\mathrm{new}}|>\tau_k\bigr)\le2(k+1)^{-c^2/2}$.
Borel--Cantelli and $\tau_k\to0$ imply
$\epsb_k^{\mathrm{new}}\to0$ a.s. Together with Part~II, this yields
$\epsb_k^{\mathrm{cur}}=\epsb_k^{\mathrm{new}}-D_k\to0$ a.s.,
and hence $\yb(\thh_k)\to\Lstar$ a.s.
\end{proof}

Convergence rates are important in practice because they provide a basis for
comparing algorithms and help engineers select algorithms for real-life problems.

\par\medskip
\Needspace{6\baselineskip}
\begin{theorem}[Convergence rates]\label{thm:rates}
\mbox{}\par\noindent
Suppose A1--A4 hold, $\sigma>0$, and
$L(\thh_{k_0})>\Lstar$ almost surely.
For the algorithm in Section~II, the following rates hold as $k\to\infty$.

\par\smallskip
An overestimate or underestimate of $\gamma$ violates A4 and can lead to
an invalid application of Theorem~\ref{thm:rates}. The algorithm itself
does not use $\gamma$, and its almost sure convergence in
Theorem~\ref{thm:as} requires only A1--A3.

\noindent\emph{(i) Suboptimality gap.} If $p>2\gamma$, then
\begin{equation}\label{eq:weibull}
 \begin{aligned}
 (ak)^{\gamma/p}[L(\thh_k)-\Lstar]&\xrightarrow{\mathrm d}W,\\
 \Prob\bigl(W>t\bigr)&=e^{-t^{p/\gamma}},\quad t\ge0.
 \end{aligned}
\end{equation}
Here $W$ is Weibull with shape $p/\gamma$ and unit scale.
If $p\le2\gamma$, then
\begin{equation}\label{eq:upper-rate}
 \limsup_{k\to\infty}
 \frac{L(\thh_k)-\Lstar}{\sqrt{\log k/k}}
 \le2c\sigma\quad\text{a.s.}
\end{equation}
$c$ here is the constant used in the algorithm in Section~II; it is chosen
before the algorithm starts and can be any fixed value satisfying
$c>2\sqrt2$.

\noindent\emph{(ii) Distance to the minimizer.} If $p>2\gamma$, then for every $t>0$,
\begin{equation}\label{eq:parameter-tail-search}
 \limsup_{k\to\infty}\Prob\bigl((ak)^{1/p}\|\thh_k-\btheta^*\|>t\bigr)
 \le e^{-\mu^{p/\gamma}t^p}.
\end{equation}
If $p\le2\gamma$, then
\begin{equation}\label{eq:parameter-tail-noise}
 \limsup_{k\to\infty}
 \frac{\|\thh_k-\btheta^*\|}{(\log k/k)^{1/(2\gamma)}}
 \le\left(\frac{2c\sigma}{\mu}\right)^{1/\gamma}\quad\text{a.s.}
\end{equation}
\end{theorem}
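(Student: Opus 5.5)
The plan is to sandwich $L(\thh_k)$ between an ideal noise-free blind random search and a threshold-limited version, using the eventual noise control \eqref{eq:eventual-noise} from the proof of Theorem~\ref{thm:as}. Write $X_j:=L(\newth{j+1})-\Lstar$ and $M_k:=\min_{j<k}X_j$. By A3 and A4(ii) the $X_j$ are i.i.d.\ with $\Prob(X_j<\eta)=a\eta^{p/\gamma}(1+o(1))$.

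On the a.s.\ event $k\ge J_0$, acceptance forces $\Delta_k<-\tau_k-D_k\le0$, so the loss is eventually nonincreasing. Also, if $X_k<L(\thh_k)-\Lstar-2\tau_k$, then $\Delta_k+D_k<-2\tau_k+\tau_k=-\tau_k$, and the candidate is accepted. Hence, for $k\ge J_0$, the gap $g_k:=L(\thh_k)-\Lstar$ obeys
\[
 g_{k+1}\le\max\bigl(\min(g_k,X_k),\,2\tau_k\bigr)
\]
up to the slack from $\tau_k$ varying slowly in $k$. Iterating this from $J_0$ gives, for a random threshold $K_1$ and all $k\ge K_1$,
\[
 g_k\le\max\Bigl(\min_{J_0\le j<k}X_j,\;2\tau_{k'}\Bigr)\quad\text{with } k'\approx k.
\]
The extreme-value estimate $M_k=O_p(k^{-\gamma/p})$, together with the a.s.\ bound $\min_{k/2\le j<k}X_j\le (C\log k/k)^{\gamma/p}$, handles the transient. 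The Borel--Cantelli lemma applies to the latter because $\Prob(\min>\epsilon_k)\le e^{-ck\epsilon_k^{p/\gamma}}$ is summable.

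\emph{Low-dimensional case $p\le2\gamma$.} Here $k^{-\gamma/p}$ and $(\log k/k)^{\gamma/p}$ are $O(\sqrt{\log k/k})$ or smaller. The case $p=2\gamma$ needs care, and I would take the window minimum over $[k-k/\log k,k)$ to get $o(\sqrt{\log k/k})$. The bound is then dominated by $2\tau_k\sim2c\sigma\sqrt{\log k/k}$, which gives \eqref{eq:upper-rate}. Combining with A4(i), eventually valid since $g_k\to0$, gives $\mu\|\thh_k-\btheta^*\|^\gamma\le g_k$, hence \eqref{eq:parameter-tail-noise}.

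\emph{High-dimensional case $p>2\gamma$.} Now $\tau_k=o(k^{-\gamma/p})$, so the upper sandwich gives $\limsup\Prob((ak)^{\gamma/p}g_k>t)\le\lim\Prob((ak)^{\gamma/p}M_k>t)=e^{-t^{p/\gamma}}$. The initial $J_0$ terms are harmless because $J_0$ is a.s.\ finite, and $\Prob(\min_{j\ge m}\ldots)$ changes negligibly.

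The matching lower bound is the main obstacle. An accepted candidate has $\Delta_k<-\tau_k-D_k\le 0$ only after $J_0$; before that, noise could accept a point better than any $X_j$ suggests. But $g_k$ always equals $X_j$ for some accepted $j<k$, or the initial gap, which is a.s.\ positive and hence irrelevant. Therefore $g_k\ge M_k$ holds deterministically, given $L(\thh_{k_0})>\Lstar$ a.s. This gives $\Prob((ak)^{\gamma/p}g_k>t)\ge\Prob((ak)^{\gamma/p}M_k>t)\to e^{-t^{p/\gamma}}$. With both bounds, \eqref{eq:weibull} follows by the standard computation $(1-a\eta^{p/\gamma}(1+o(1)))^k$ with $\eta=t(ak)^{-\gamma/p}$.

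Finally, \eqref{eq:parameter-tail-search} follows by applying A4(i) on the event $g_k<\eta_0$, whose probability tends to $1$. On that event,
\[
 \{(ak)^{1/p}\|\thh_k-\btheta^*\|>t\}\subseteq\{(ak)^{\gamma/p}g_k>\mu t^\gamma\},
\]
and the Weibull tail gives $e^{-\mu^{p/\gamma}t^p}$.
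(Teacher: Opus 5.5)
Your sandwich is the same one the paper uses, but the boundary case $p=2\gamma$ of \eqref{eq:upper-rate} fails as you propose it. With $p/\gamma=2$ and a window of length $k/\log k$, A4(ii) gives
\[
 \Prob\Bigl(\min_{k-k/\log k\le j<k}X_j>u\sqrt{\log k/k}\Bigr)\longrightarrow e^{-au^2}.
\]
So this window minimum is of exact order $\sqrt{\log k/k}$, not $o(\sqrt{\log k/k})$. Along disjoint windows, the second Borel--Cantelli lemma even makes its normalized limsup infinite a.s.

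Lengthening the window to $k/\ell(k)$, with $\ell(k)\to\infty$ and $\ell(k)=o(\log k)$, makes the minimum small in probability. However, the exceedance probability is then about $k^{-au^2/\ell(k)}$, which is not summable in $k$. Fixed-fraction windows $[(1-\epsilon)k,k)$ are summable only for $u>(a\epsilon)^{-1/2}$, which leaves a nonvanishing extra term. So Borel--Cantelli over all $k$, with any of these windows, does not give the constant $2c\sigma$ at $p=2\gamma$.

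The missing idea is a subsequence argument. The paper applies Borel--Cantelli only along $k_n=\lceil e^{\sqrt n}\rceil$ with $m_n=k_n-\lceil k_n/\sqrt{\log k_n}\rceil$, where the bound becomes $\exp(-\tfrac a2u^{p/\gamma}n^{1/4})$. It then covers $k_n\le k<k_{n+1}$ using $[m_n,k_n)\subseteq[m_n,k)$, the monotonicity of $\sqrt{\log k/k}$, and $m_n/k_n\to1$, $k_{n+1}/k_n\to1$. (A geometric subsequence with fixed-fraction windows, followed by letting the ratio tend to $1$ and the fraction to $0$, would also work.)

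Several other steps also need repair.
\begin{itemize}
\item Your recursion $g_{k+1}\le\max(\min(g_k,X_k),2\tau_k)$ is false. A rejection only gives $g_k\le X_k+\tau_k+D_k\le X_k+2\tau_k$. For example, with $X_k=g_k-\tfrac32\tau_k>2\tau_k$ and $D_k\ge\tfrac12\tau_k$, the candidate is rejected while $g_{k+1}=g_k>\max(X_k,2\tau_k)$.
\item The valid bound is additive, $g_k\le\min_{m\le j<k}X_j+2\tau_m$ for $J_0\le m<k$. The threshold sits at the window start, not at some $k'\approx k$ for the whole range $[J_0,k)$.
\item Hence, for $p\le2\gamma$, you need $m/k\to1$ (your $[k/2,k)$ window gives only $2\sqrt2c\sigma$) and a window minimum that is $o(\sqrt{\log k/k})$ a.s. For $p<2\gamma$, the window $[k-k/\log k,k)$ with Borel--Cantelli over all $k$ does this.
\item For $p>2\gamma$, you need $m\to\infty$, $m=o(k)$ and $(ak)^{\gamma/p}\tau_m\to0$, e.g.\ $m=\lfloor k/\log k\rfloor$ as in the paper.
\item The lower bound is $g_k\ge\min(g_{k_0},M_k)$, with $M_k$ taken over $k_0\le j<k$; it is not $g_k\ge M_k$ deterministically. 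The initial gap drops out only because $\Prob(g_{k_0}<t(ak)^{-\gamma/p})\to0$.
\end{itemize}
With these repairs, the rest of your argument, including part (ii), agrees with the paper's.
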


\par\smallskip
D\'orea \cite[p.~78]{dorea1986} obtained a Weibull limit with exact
function evaluations. For smooth loss functions and noisy measurements,
Chia and Glynn \cite[Thm.~3.3(i)]{chia2013} obtained a Weibull limit
when the averaging error is asymptotically negligible relative to the
random-search error. Their nonadaptive algorithm takes the same number
of measurements at each candidate point and reports the best sample
average, without sequential comparisons.

\par\smallskip
\begin{proof}
Let $m=m(k)$ be deterministic, with $k_0\le m<k$ and
$m\to\infty$ as $k\to\infty$. Since the index $J_0$ in
\eqref{eq:eventual-noise} is finite almost surely, $m\ge J_0$
eventually almost surely. For these sufficiently large $k$,
\begin{equation}\label{eq:interval}
 \begin{aligned}
 &L(\thh_k)-\Lstar\\
 &\quad\ge\min\left\{
 \begin{aligned}
 &L(\thh_{k_0})-\Lstar,\\
 &\min_{k_0\le j<k}[L(\newth{j+1})-\Lstar]
 \end{aligned}\right\},\\
 &L(\thh_k)-\Lstar\\
 &\quad\le\min_{m\le j<k}[L(\newth{j+1})-\Lstar]+2\tau_m.
 \end{aligned}
\end{equation}
The lower bound follows because the current estimate is the initial point
or an earlier candidate. For the upper bound, acceptance at $j\ge m$
and subsequent monotonicity give
\[
 L(\thh_k)-\Lstar\le L(\newth{j+1})-\Lstar.
\]
Rejection instead gives
\[
 L(\thh_j)-L(\newth{j+1})\le\tau_j+D_j\le2\tau_j,
\]
and $L(\thh_k)\le L(\thh_j)$.
Taking the minimum and using decreasing $\tau_j$ proves the bound.

Since the candidate suboptimality gaps $L(\newth{j+1})-\Lstar$ are
i.i.d., for every deterministic interval,
\begin{equation}\label{eq:minimum}
 \begin{aligned}
 &\Prob\!\left(\min_{m\le j<k}[L(\newth{j+1})-\Lstar]\ge x\right)\\
 &\quad=\bigl[1-\Prob\bigl(L(\newth{k_0+1})-\Lstar<x\bigr)\bigr]^{k-m}.
 \end{aligned}
\end{equation}
For $p>2\gamma$, take $m=\lfloor k/\log k\rfloor$. Each minimum in
\eqref{eq:interval} is over $n_k$ consecutive candidates, where
$n_k=k-k_0$ or $n_k=k-m$, so $n_k/k\to1$. By A4(ii), for $t>0$,
\[
 \begin{aligned}
 &n_k\,\Prob\!\left(
 L(\newth{k_0+1})-\Lstar<\frac{t}{(ak)^{\gamma/p}}\right)\\
 &\qquad\longrightarrow t^{p/\gamma}.
 \end{aligned}
\]
Hence, by \eqref{eq:minimum}, both minima multiplied by
$(ak)^{\gamma/p}$ have limiting tail probability $e^{-t^{p/\gamma}}$.
Including the initial loss in the lower bound changes this tail
probability by at most
$\Prob\bigl(L(\thh_{k_0})-\Lstar<t/(ak)^{\gamma/p}\bigr)\to0$.
For sufficiently large $k$,
\[
 (ak)^{\gamma/p}\tau_m
 \le\sqrt2c\sigma a^{\gamma/p}
 k^{\gamma/p-1/2}\log k\longrightarrow0
\]
because $\gamma/p<1/2$. The bounds in \eqref{eq:interval} hold whenever
$m\ge J_0$, and $\Prob(J_0>m)\to0$. Their limiting distributions
therefore imply \eqref{eq:weibull}.

For $p\le2\gamma$, set
\[
 k_n=\left\lceil e^{\sqrt n}\right\rceil,\qquad
 m_n=k_n-\left\lceil\frac{k_n}{\sqrt{\log k_n}}\right\rceil.
\]
Then $m_n\to\infty$, $m_n/k_n\to1$, $k_{n+1}/k_n\to1$, and
$k_n-m_n\ge k_n/\sqrt{\log k_n}$. Since $p/(2\gamma)\le1$,
$(\log k_n/k_n)^{p/(2\gamma)}\ge\log k_n/k_n$ for sufficiently large $n$.
Thus A4(ii) and \eqref{eq:minimum} give, for every fixed $u>0$
and all sufficiently large $n$,
\[
 \begin{aligned}
 &\Prob\!\left(
 \frac{\min_{m_n\le j<k_n}[L(\newth{j+1})-\Lstar]}
 {\sqrt{\log k_n/k_n}}\ge u\right)\\
 &\quad\le\exp\!\left[-\frac a2u^{p/\gamma}\sqrt{\log k_n}\right]\\
 &\quad\le\exp\!\left[-\frac a2u^{p/\gamma}n^{1/4}\right].
 \end{aligned}
\]
The last bounds are summable in $n$. The first Borel--Cantelli lemma,
applied to $u=1,1/2,1/3,\ldots$, therefore shows that these interval
minima divided by $\sqrt{\log k_n/k_n}$ tend to zero a.s.
For $k_n\le k<k_{n+1}$ and all sufficiently large $n$,
equation~\eqref{eq:interval} with $m=m_n$, the inclusion
$[m_n,k_n)\subseteq[m_n,k)$, and decreasing $\sqrt{\log k/k}$ give
\[
 \begin{aligned}
 &\frac{L(\thh_k)-\Lstar}{\sqrt{\log k/k}}\\
 &\quad\le
 \frac{\min_{m_n\le j<k_n}[L(\newth{j+1})-\Lstar]+2\tau_{m_n}}
 {\sqrt{\log k_{n+1}/k_{n+1}}}.
 \end{aligned}
\]
Since $m_n/k_n\to1$ and $k_{n+1}/k_n\to1$, the candidate-minimum
term on the right tends to zero a.s.\ and the threshold term tends to
$2c\sigma$. This proves \eqref{eq:upper-rate}.

By Theorem~\ref{thm:as}, $0\le L(\thh_k)-\Lstar<\eta_0$ for all
sufficiently large $k$ almost surely. Thus A4(i) applies to $\thh_k$.
For every $r>0$,
\[
 \begin{aligned}
 \Prob\bigl(\|\thh_k-\btheta^*\|>r\bigr)
 &\le\Prob\bigl(L(\thh_k)-\Lstar\ge\eta_0\bigr)\\
 &\quad+\Prob\bigl(L(\thh_k)-\Lstar>\mu r^\gamma\bigr).
 \end{aligned}
\]
The first term tends to zero. For $p>2\gamma$, substituting
$r=t/(ak)^{1/p}$ into the second term and using \eqref{eq:weibull}
proves \eqref{eq:parameter-tail-search}.
For $p\le2\gamma$, local growth gives, eventually a.s.,
\[
 \frac{\|\thh_k-\btheta^*\|}{(\log k/k)^{1/(2\gamma)}}
 \le\left(\frac{L(\thh_k)-\Lstar}{\mu\sqrt{\log k/k}}\right)^{1/\gamma}.
\]
Taking the limsup and applying \eqref{eq:upper-rate} proves
\eqref{eq:parameter-tail-noise}.
\end{proof}

\section{Numerical Experiments}\label{sec:numerics}
We first compare running averages with resampling and naive search, and then
check the rate results on a photovoltaic model.

\subsection{Convergence behavior and comparison with resampling}
We compare three methods on two problems adapted from
\cite[Examples~2.1 and~6.6]{spall2003}: a quadratic loss
\mbox{$L(\btheta)=\btheta^\top\btheta$} with $p=2$ and $\sigma=2$, and a skewed
quartic loss
\mbox{$L(\btheta)=\sum_{i=1}^{p}[(B\btheta)_i^2
 +0.1(B\btheta)_i^3+0.01(B\btheta)_i^4]$},
where $B$ is the upper triangular matrix of ones, with $p=5$ and
$\sigma=3$; details are given in \cite[Sec.~4]{zhoujiang2026}.
Besides the proposed algorithm with
$c=3$, we run a resampling baseline, which uses the same candidate averages
and threshold but measures the current estimate anew $k$ times at every
iteration, and naive blind random search, which is the standard blind random
search algorithm \cite[p.~38]{spall2003} applied directly to noisy measurements,
treating them as exact loss values. Because loss measurements are usually the
dominant cost \cite[p.~13]{spall2003}, all methods are compared at the same total
number of measurements, over 200 independent replications.

\begin{figure}[t]
\centering
\includegraphics[width=\columnwidth]{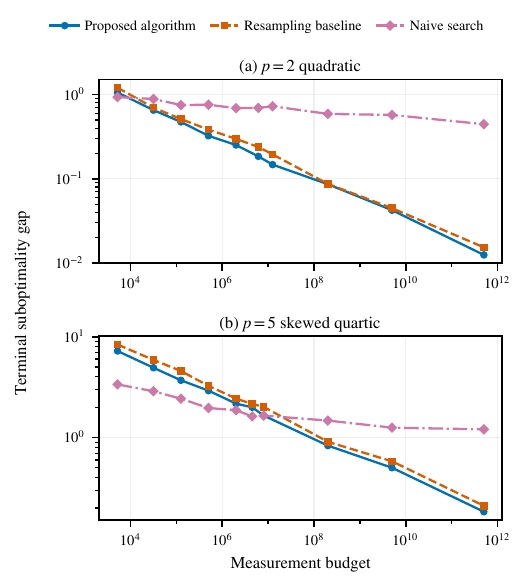}
\caption{Median terminal suboptimality gap versus total measurement budget,
over 200 runs.}
\label{fig:budget-comparison}
\end{figure}

Fig.~\ref{fig:budget-comparison} shows that both averaging methods keep
improving as the budget grows, whereas naive search improves only slowly
after an early lead on the quartic. At the largest budget, the median suboptimality gaps
are $0.0125$ (proposed), $0.0154$ (resampling), and $0.446$ (naive) for the
quadratic, and $0.184$, $0.210$, and $1.20$ for the quartic. Running averages
do better than resampling mainly because they allow $\sqrt2$ times as
many iterations within the same budget. Across the 400 runs at the largest
budget, none of the 2801 accepted updates of the proposed algorithm increased
the true loss.

\begin{figure*}[t]
\centering
\includegraphics[width=\textwidth]{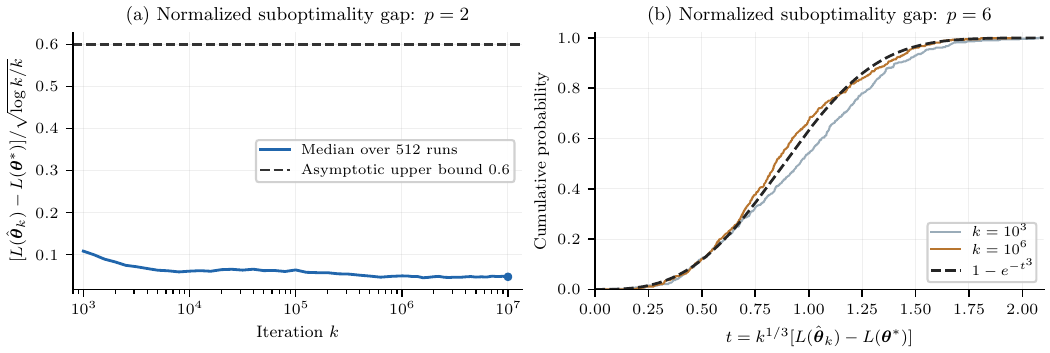}
\caption{Rate diagnostics from 512 runs per dimension. (a) $p=2$: median
of the normalized suboptimality gap and the asymptotic upper bound $0.6$
for the photovoltaic model in Section~\ref{subsec:pv-example}. The bound is
conservative. (b) $p=6$: empirical distributions
of $k^{1/3}[L(\thh_k)-\Lstar]$ at $k=10^3$ and $10^6$, and the Weibull limit
$1-e^{-t^3}$.}
\label{fig:pv-rates}
\end{figure*}

\subsection{Photovoltaic example and convergence rates}\label{subsec:pv-example}
We apply Theorems~\ref{thm:as} and~\ref{thm:rates} to a simplified
photovoltaic model. Partial shading can produce competing power peaks,
including in systems with several converter duty ratios \cite{chao2016},
and quadratic interpolation of power curves is a related approximation
\cite{liu2014}.

\par\smallskip
\Needspace{4\baselineskip}
\noindent\emph{Model and objective.}
The array operates under fixed shading and temperature. The control vector
$\btheta\in\Theta$, where $\Theta=\{\btheta\in\R^p:\|\btheta\|\le1\}$,
sets the duty ratios $0.50+0.20\theta_i$. Let
$\btheta^*=(2\sqrt p)^{-1}(1,\ldots,1)^\top$, so that
$\|\btheta^*\|=1/2$. Define the loss
\begin{equation}\label{eq:pv-gap}
 L(\btheta)=\min\{\|\btheta-\btheta^*\|^2,\;1/4+\|\btheta+\btheta^*\|^2\},
\end{equation}
and the total power, measured in watts, by
\[
 P(\btheta)=100-20L(\btheta).
\]
Minimizing the loss is equivalent to maximizing the total power. The
global peak at $\btheta^*$ is $100$~W with $\Lstar=0$; the inferior
local peak at $-\btheta^*$ is $95$~W.

\par\smallskip
\noindent\emph{Measurements and sampling.}
The noisy loss measurements are
\[
 y(\btheta)=\min\bigl\{\|\btheta-\btheta^*\|^2,\;
 \tfrac14+\|\btheta+\btheta^*\|^2\bigr\}+\epsilon,
\]
where the noise terms are i.i.d.\ $N(0,\sigma^2)$ with \mbox{$\sigma=0.1$}.
Candidate points are sampled independently and uniformly from $\Theta$.
\par\medskip
\Needspace{9\baselineskip}
\noindent\emph{Application model and its convergence rate.}\par
\noindent
For the model above, let $k_0=1$, $\thh_1=-\btheta^*$, and
$c=3>2\sqrt2$. Then A1--A4 hold with $\gamma=2$, $\mu=a=1$, and
$\eta_0=1/4$, and $\thh_k\to\btheta^*$ almost surely. For $p\le4$,
\begin{align}
 \limsup_{k\to\infty}
 \frac{L(\thh_k)-\Lstar}{\sqrt{\log k/k}}
 &\le0.6\quad\text{a.s.},\label{eq:pv-as-loss}\\
 \limsup_{k\to\infty}
 \frac{\|\thh_k-\btheta^*\|}{(\log k/k)^{1/4}}
 &\le\sqrt{0.6}\quad\text{a.s.}\label{eq:pv-as-parameter}
\end{align}
For $p>4$ and $t\ge0$,
\begin{equation}\label{eq:pv-weibull}
 \begin{aligned}
 \Prob\bigl(k^{2/p}[L(\thh_k)-\Lstar]>t\bigr)
 &\longrightarrow e^{-t^{p/2}},\\
 \Prob\bigl(k^{1/p}\|\thh_k-\btheta^*\|>t\bigr)
 &\longrightarrow e^{-t^{p}}.
 \end{aligned}
\end{equation}
\par\smallskip
\noindent\emph{Assumption checks.}
A1 follows from \eqref{eq:pv-gap}: $L(\btheta)\ge\min(r^2,1/4)>0$
whenever $\|\btheta-\btheta^*\|\ge r>0$. A2 and the independence
requirements of A3 follow from the specified Gaussian noise and sampling scheme.

For $0<\eta<1/4$, $\{L<\eta\}$ is the ball of radius $\sqrt{\eta}$
centered at $\btheta^*$, contained in $\Theta$ because
$\|\btheta^*\|+\sqrt{\eta}<1$. Its sampling probability $\eta^{p/2}$
and the identity $L(\btheta)=\|\btheta-\btheta^*\|^2$ on this ball
give A3 and A4 with $\gamma=2$, $\mu=a=1$, and $\eta_0=1/4$.

Since $L(\thh_1)=1/4>\Lstar$, Theorem~\ref{thm:rates} applies,
and by Theorem~\ref{thm:as}, $L(\thh_k)<1/4$ for all large $k$
almost surely, hence $L(\thh_k)=\|\thh_k-\btheta^*\|^2$ for all large $k$.
For $p\le2\gamma=4$, \eqref{eq:upper-rate} and
\eqref{eq:parameter-tail-noise} with $2c\sigma=2\times3\times0.1=0.6$
and $\mu=1$ give \eqref{eq:pv-as-loss} and \eqref{eq:pv-as-parameter}.
For $p>4$, substituting $a=1$ and $\gamma=2$ into \eqref{eq:weibull}
gives the first limit in \eqref{eq:pv-weibull}; taking square roots through
the identity above turns $e^{-(t^2)^{p/2}}$ into $e^{-t^p}$, which is
the second.

\par\medskip
\noindent\emph{Simulation setup.}
We use $k_0=1$, $\thh_1=-\btheta^*$, and $c=3$, with 512 independent
replications per dimension. The cases $p=2$ and $p=6$ lie on opposite
sides of $2\gamma=4$:
\par\noindent
For $p=2$, we run to $k=10^7$ and compare the median
of $[L(\thh_k)-\Lstar]/\sqrt{\log k/k}$ against the bound~$0.6$.

\par\noindent
For $p=6$, we run to $k=10^6$ and compare the
empirical CDF of $k^{1/3}[L(\thh_k)-\Lstar]$ with the Weibull CDF
$1-e^{-t^3}$.
\par
The distance to the minimizer is obtained from
$\|\thh_k-\btheta^*\|^2=L(\thh_k)-\Lstar$, which holds in every run for
$k\ge10^3$.

Fig.~\ref{fig:pv-rates}(a) shows that, for $p=2$, the bound is
conservative: the median of the normalized suboptimality gap is $0.108$ at
$k=10^3$ and $0.047$ at $k=10^7$, and no run exceeds the bound at any
reported $k\ge10^3$.
For $p=6$, Fig.~\ref{fig:pv-rates}(b) shows that the empirical
distribution at $k=10^6$ is consistent with the Weibull limit: its median
of $0.84$ is close to the limit's median of $0.885$, and the largest
distance between its empirical CDF and the limit CDF is $0.057$. At
$k=10^3$, also shown in Fig.~\ref{fig:pv-rates}(b), the threshold still
shifts the distribution to the right (median $0.96$). Therefore, for
$p=6$, the suboptimality gap decreases as $k^{-1/3}$, and the distance to
the minimizer as $k^{-1/6}$.

\FloatBarrier
\section{Conclusions}
This paper extends blind random search to noisy loss measurements by
combining running averages and a decreasing acceptance threshold. The
algorithm converges almost surely to the global minimizer.

In high dimensions, the normalized suboptimality gap has a Weibull
limit; in low dimensions, we establish an almost sure asymptotic upper
bound on the suboptimality gap. This bound decreases to zero as the
number of iterations increases. These rates help engineers assess the
asymptotic convergence speed of our algorithm.

In future work, we will explore applying the combination of averaging and
thresholding to other stochastic optimization algorithms, including local
random search and simulated annealing.

\Needspace{8\baselineskip}

\end{document}